\documentclass[12pt]{amsart}

\usepackage{amsmath}
\usepackage{amssymb}  
\usepackage{latexsym} 
\usepackage{comment}
\usepackage{url}
\usepackage{fullpage,url,amssymb,amsmath,amsthm,amsfonts,mathrsfs}
\usepackage[usenames,dvipsnames]{color}
\usepackage[pagebackref = true, colorlinks = true, linkcolor = blue, citecolor = Green]{hyperref}
\usepackage[alphabetic,lite]{amsrefs}
\usepackage{enumitem}
\usepackage{amscd}   
\usepackage[all, cmtip]{xy} 
\usepackage{xfrac}
\usepackage[T1]{fontenc}
\usepackage{subfiles}
\usepackage{colonequals}

\usepackage[all]{xy}
\usepackage{fullpage}

\usepackage{color} 

\def\act#1#2%
  {\mathop{}%
   \mathopen{\vphantom{#2}}^{#1}%
   \kern-3\scriptspace%
   #2}

\newcommand{\F}{{\mathbb F}}

\newtheorem{Theorem}{Theorem}[section]
\newtheorem{Lemma}[Theorem]{Lemma}

\newtheorem{Corollary}[Theorem]{Corollary}
\newtheorem{Definition}[Theorem]{Definition}

\newtheorem{Remark}[Theorem]{Remark}

\begin{document}

\title{Algebraic capsets}

\author{Cassie Grace}
\address{School of Mathematics and Statistics, University of Canterbury, Private Bag 4800, Christchurch 8140, New Zealand}
\email{cassie.grace@pg.canterbury.ac.nz}

\author{Jos\'e Felipe Voloch}
\address{School of Mathematics and Statistics, University of Canterbury, Private Bag 4800, Christchurch 8140, New Zealand}
\email{felipe.voloch@canterbury.ac.nz}
\urladdr{http://www.math.canterbury.ac.nz/\~{}f.voloch}

\keywords{Affine spaces, Finite fields, Capsets}
\subjclass{51E22}

\begin{abstract}
Capsets are subsets of $\F_3^n$ with no three points on a line and a capset is complete if it is not a subset of a larger capset. We study some new constructions of capsets via algebraic equations over extensions of $\F_3$. In particular we construct the smallest known complete capsets with size proportional to the best known lower bound.
\end{abstract}

\maketitle
%

\section{Introduction}

A capset is a subset $C \subset \F_3^n$ with no three points on a line. A question that has attracted a lot of interest (see e.g. \cite{Tao},\cite{Matrix}) is the following: What is the largest size of a capset in $\F_3^n$ as a function of $n$? If we denote this value by $a(n)$ then it can be shown (see \cite[Proposition 2.2]{Tyrrell}) that $c=\lim a(n)^{1/n}$ exists. Moreover, it is known that $2.2202 \le c$ (\cite{Nature}) and $c \le 2.756$ (\cite{Jordan}). The exact value of $a(n)$ is known for $n \le 6$ (\cite{capsetKnownSizes1, capetKnownSizes2, capsetKnownSizes3, capsetKnownSizes4}). It is also known that $a(8)\geq 512$ (\cite{Nature}).

\begin{Definition}
    A \emph{capset} is a subset of $\mathbb{F}_3^n$ containing no three distinct points on a line. A capset is \emph{complete} if it is not a subset of a larger capset. 
    \end{Definition}

The main purpose of this paper is to exhibit some constructions of complete capsets and, in particular, construct the smallest known complete capsets.

If $q=3^m$ is a power of $3$, choosing a basis for $\F_q/\F_3$ induces an
identification of $\F_q^d$ with $\F_3^{dm}$ for any integer $d$. We will
define some subsets of $\F_q^d$ by algebraic equations and view them as
capsets in $\F_3^{dm}$ to study their properties.

We can also consider subsets of $\F_q^d$ with no three points on an $\F_q$-line.
We call such a set a cap (to distinguish it from a capset under the above 
identification). It is clear that a cap is a capset but not conversely.
We can also consider the analogous notion of complete caps. A cap $C$ is complete as a cap if $C$ is not contained in a larger cap. That is, for every
point $P$ in the complement of $C$, there exists an $\F_q$-line through $P$
which meets $C$ in two points. We remark that a cap can be complete as a cap and yet not be complete as a capset. For example, a conic in the plane $\F_q^2, q >3$. This follows from the lemma below.

\begin{Lemma}
\label{lem:lower}
If there exists a complete capset in $\F_3^n$ with $N$ points, then
$N(N+1)/2 \ge 3^n$.
\end{Lemma}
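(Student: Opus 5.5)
Our plan is a counting argument. Let $C\subset\F_3^n$ be a complete capset with $|C|=N$. In $\F_3^n$, three distinct points $x,y,z$ are collinear exactly when $x+y+z=0$, i.e.\ $z=-(x+y)$. So for each unordered pair $\{x,y\}$ of distinct points of $C$, the line through them has exactly one further point, namely $-(x+y)$. Because $C$ is a capset, this third point does not lie in $C$.

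Next we use completeness. Take any $P\in\F_3^n\setminus C$. Since $C\cup\{P\}$ is not a capset, it contains three collinear points, and one of them must be $P$, because $C$ itself has no three collinear points. Hence there are distinct $x,y\in C$ with $P=-(x+y)$. This shows that the map
\[
\{\{x,y\}\subset C : x\neq y\}\longrightarrow \F_3^n\setminus C,\qquad \{x,y\}\mapsto -(x+y),
\]
is surjective. Comparing the sizes of its domain and codomain gives $\binom{N}{2}\ge 3^n-N$.

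Finally, $\binom{N}{2}+N = N(N+1)/2$, so the inequality rearranges to $N(N+1)/2\ge 3^n$, which is the statement of Lemma~\ref{lem:lower}.

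There is no real obstacle here. The only point needing care is that lines in $\F_3^n$ have exactly three points, which is what makes the third point determined by the pair. An equivalent way to phrase the count is that $C$, together with the set of third points of its secant lines, covers all of $\F_3^n$.
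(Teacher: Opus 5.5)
Your proof is correct and follows essentially the same argument as the paper. The paper states it contrapositively: if $N(N-1)/2 < 3^n - N$, some point of the complement lies on no secant line, so $C$ is not complete. You state the same count directly, as surjectivity of the map sending each pair to the third point of its line.
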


\begin{proof}
The $\F_3$ lines through the $N(N-1)/2$ pairs of points of the capset have their third point in the complement of the capset, so if $N(N-1)/2 < 3^n -N$, there is a point in the complement not on any line through two points in the capset, hence the capset is not complete. 
\end{proof}
In \cite{Csajbok}, subsets of $\F_p^n$ with $p$ prime no three points in arithmetic progression are studied. This condition is equivalent to the capset condition when $p=3$. They prove a lower bound similar, but slightly weaker, to Lemma \ref{lem:lower} in this more general situation. They also ask if this lower bound is best possible up to a multiplicative constant (loc. cit., Problem 5.1). We give a positive answer to this question in the case $p=3$ (see Corollary \ref{cor:upper}).

\subsection*{Acknowledgements}
The second author was supported by the Marsden Fund administered by the Royal
Society of New Zealand. The authors like to thank A. Bishnoi, B. Csajb\'ok,
 J. Sheekey and G. Van de Voorde for helpful comments.

\section{Constructions}

\begin{Theorem}
\label{thm:two-conics}
Let $q=3^m$ and $C \subset \F_q^2$ be given by
$$C = \{(x,x^2) \mid x \in \F_q, x \ne 0\}\cup \{(x,-x^2) \mid x \in \F_q, x \ne 0\}.$$

Then $C$ is a capset of size $2(q-1)$. If $m$ is odd, then $C$ is complete.
\end{Theorem}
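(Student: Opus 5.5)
We use that in $\F_3^n$ three distinct points $P,Q,R$ are collinear if and only if $P+Q+R=0$. This holds after identifying $\F_q^2$ with $\F_3^{2m}$, because the identification is additive. So the plan is to show that no three distinct points of $C$ sum to zero, and, for completeness, that every point outside $C$ is of the form $-(P+Q)$ with $P\ne Q$ in $C$. Throughout we work in characteristic $3$, so $2=-1$.

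First the size. The two punctured conics $y=x^2$ and $y=-x^2$ meet only where $x^2=-x^2$, that is at $x=0$, which is excluded. Hence $|C|=2(q-1)$.

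For the cap property we consider the possible distributions of three points among the two conics.
\begin{itemize}
\item All three points lie on $y=\varepsilon x^2$. Then $x_1+x_2+x_3=0$ and $x_1^2+x_2^2+x_3^2=0$. Expanding $(x_1+x_2+x_3)^2$ gives $e_2=0$. So the $x_i$ are roots of $t^3-e_3$, which in characteristic $3$ is $(t-e_3^{1/3})^3$. Thus $x_1=x_2=x_3$, contradicting distinctness.
\item Two points $(x_1,\varepsilon x_1^2),(x_2,\varepsilon x_2^2)$ lie on one conic and $(x_3,-\varepsilon x_3^2)$ on the other. Substituting $x_3=-(x_1+x_2)$ into $x_1^2+x_2^2-x_3^2=0$ gives $-2x_1x_2=0$. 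This is impossible since all $x_i\neq 0$.
\end{itemize}

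For completeness, take $(a,b)\notin C$ and set $(u,v)=-(a,b)$. We look for distinct $P,Q\in C$ with $P+Q=(u,v)$.

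If $a\ne 0$, we use one point on each conic. We need $x+y=u$ and $x^2-y^2=v$, so $x-y=v/u$. This gives $x=-(u+v/u)$ and $y=-(u-v/u)$. These are nonzero exactly when $v\neq \pm u^2$, i.e. $b\ne\pm a^2$, which holds since $(a,b)\notin C$. The two points are automatically distinct because they lie on different conics. The origin is handled the same way: $(x,x^2)+(-x,-x^2)=0$ for any $x\ne0$.

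The remaining case, $a=0$ and $b\ne0$, is where the hypothesis that $m$ is odd is needed, and I expect it to be the main point. Here $u=0$, and we use two points on the same conic $y=\varepsilon x^2$. We need $x+y=0$ and $x^2+y^2=\varepsilon v$, i.e. $y=-x$ with $2x^2=\varepsilon v$, i.e. $x^2=-\varepsilon v$. Since $m$ is odd, $-1$ is a nonsquare in $\F_q$, so exactly one of $\pm v$ is a nonzero square. Choosing $\varepsilon$ accordingly gives $x\ne0$ and $y=-x\ne x$. These are two distinct points of $C$ whose sum is $(0,v)$, so $(0,b)$ lies on a line through two points of $C$. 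Hence $C$ is complete.
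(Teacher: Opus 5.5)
Your proof is correct and follows the paper's argument essentially step for step. The mixed case leads to $x_1x_2=0$ in both. Your completeness points for $a\neq 0$, namely $x=-(u+v/u)$ and $y=-(u-v/u)$, are exactly the paper's $x_1=(a^2-b)/a$ and $x_2=-(a+x_1)$. For the points $(0,b)$, both proofs use that $-1$ is a non-square when $m$ is odd.

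The only differences are minor. You show directly, via $e_2=0$ and $t^3-e_3=(t-e_3^{1/3})^3$, that three points of one parabola cannot sum to zero, where the paper just cites that conics are caps. You also check explicitly that the two punctured parabolas are disjoint, which the count $2(q-1)$ needs.
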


\begin{proof}
The sets $C_{\pm} = \{(x,\pm x^2) \mid x \in \F_q^*\}$ are subsets of conics and, therefore, caps in the plane so, in particular, capsets. We prove first that $C = C_+ \cup C_-$ is a cap. Suppose not and let $P,Q,R$ be distinct $\F_3$-collinear points of $C$. It is not possible that all three of $P,Q,R$ are in $C_+$ or $C_-$. We first consider the case
$P = (x_1,x_1^2), Q = (x_2,x_2^2) \in C_+, R =(x_3,-x_3^2) \in C_-$.
We have $x_1+x_2+x_3 = x_1^2+x_2^2 -x_3^2=0$. So
$x_1^2+x_2^2 = x_3^2 = (x_1+x_2)^2$, which yields $x_1x_2=0$ which is not
possible since $C$ does not contain the origin, by hypothesis.
Now, the case 
$P = (x_1,-x_1^2), Q = (x_2,-x_2^2) \in C_-, R =(x_3,x_3^2) \in C_+$
is similar to the first case. So $C$ is a cap.

Now we show that $C$ is complete. Assume that $(a,b) \notin C$. Consider first the case $a \ne 0$. Then we take $x_1 = (a^2-b)/a$ and note that $x_1 \ne 0$ since $(a,b) \notin C$. Let $x_2 = -(a+x_1)$ and we check that
$(a,b), (x_1,x_1^2), (x_2,-x_2^2)$ are $\F_3$-collinear. Indeed,
$$a+x_1+x_2=0, b+x_1^2-x_2^2 = b+x_1^2 -(a+x_1)^2= b-a^2 +ax_1=0.$$
Now, $(x_1,x_1^2) \in C$ by construction but we need to check that 
$(x_2,-x_2^2) \in C$, i.e., that $x_2 \ne 0$. If $x_2 = 0$, then $x_1=-a$
so $a^2-b = -a^2$ so $b = -a^2$, i.e. $(a,b) \in C$, contrary
to hypothesis.

Assume now that $a=0$. If $b=0$, then $(1,1),(-1,-1),(0,0)$ are $\F_3$-collinear and $(1,1),(-1,-1) \in C$. If $b=x^2, x\ne 0$, then
$(x,x^2),(-x,x^2),(0,b)$ are $\F_3$-collinear
 and $(x,x^2),(-x,x^2) \in C$. The remaining case is when $b$ is a nonsquare but, since we assumed that $m$ is odd, $-1$ is a non-square and we can write $b=-x^2, x\ne 0$ and, in this case $(x,-x^2),(-x,-x^2),(0,b)$ are $\F_3$-collinear
 and $(x,-x^2),(-x,-x^2) \in C$.

\end{proof}

The next result provides a positive answer to Problem 5.1 of \cite{Csajbok} for $p=3$. It was pointed out to us by A. Bishnoi that the construction from \cite{07731270} can also be used to deduce this result.

\begin{Corollary}
\label{cor:upper}
For every $n$, there exists a complete capset in $\F_3^n$ of size $O(\sqrt{3^n})$. 
\end{Corollary}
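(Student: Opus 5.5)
Theorem \ref{thm:two-conics} handles even $n$ with $n/2$ odd: taking $m=n/2$ odd and $q=3^m$, and identifying $\F_q^2$ with $\F_3^{2m}=\F_3^n$, it gives a complete capset of size $2(q-1) < 2\cdot 3^{n/2}$. To cover every $n$, I would reduce to this case using a product (or lifting) construction that preserves completeness. The key tool is a lemma: if $C\subset \F_3^k$ is a complete capset, then $C\times \F_3^{j}$ is \emph{not} a capset, so instead use the following standard construction.

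\textbf{Doubling lemma.} If $C\subset\F_3^k$ is a complete capset, then $C' = C\times\{0,1\}\subset \F_3^{k}\times\F_3$ is a complete capset in $\F_3^{k+1}$ of size $2|C|$.

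First, $C'$ is a cap. Three points $(c_i,t_i)$ are collinear exactly when $\sum c_i=0$ and $\sum t_i=0$ in $\F_3$. Since each $t_i\in\{0,1\}$, the sum $\sum t_i$ vanishes only if all $t_i$ are equal. The $c_i$ then form a line or coincide in $C$. Since $C$ is a cap, all $c_i$ are equal, and then all three points are equal.

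Second, $C'$ is complete. Take $(a,t)\notin C'$. If $a\notin C$, choose $c_1,c_2\in C$ with $c_1+c_2+a=0$, and put second coordinates $t,t$ on them, since $t+t+t=0$. This works if $t\in\{0,1\}$; if $t=2$, use second coordinates $0$ and $1$ instead, since $0+1+2=0$. If $a\in C$, then $t=2$, and $(a,0),(a,1),(a,2)$ is a line.

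Iterating the lemma, from $n_0$ we get complete capsets in $\F_3^{n_0+j}$ of size $2^j|C|$. This is only $O(\sqrt{3^n})$ if $j$ stays bounded, which is fine: for each $n$, choose $n_0\le n$ with $n_0\equiv 2\pmod 4$ and $n-n_0\le 3$. For the finitely many small $n<2$, the bound is trivial. The capset then has size at most $2^3\cdot 2\cdot 3^{n_0/2}\le 16\cdot 3^{n/2}$.

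The main thing to check carefully is the completeness step of the doubling lemma, in particular the case $a\notin C$ with $t=2$. There the two points of $C$ on a line through $a$ must be lifted with different second coordinates, and I need $c_1\neq c_2$, which holds because $a\notin C$. The remaining steps are routine.
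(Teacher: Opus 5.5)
Your proof is correct, but for $n\equiv 0,1 \pmod 4$ it takes a different route from the paper's.

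\textbf{What the paper does.} It uses your doubling step $C\times\{0,1\}$ only once, to pass from even $n-1$ to odd $n$. For $n=2m$ with $m$ even, it keeps the two-parabola set of Theorem~\ref{thm:two-conics}. Since $-1$ is then a square, it notes that the only points still addable are $(0,b)$ with $b$ a non-square. It proposes to block these by adjoining, on the line $\{0\}\times\F_{3^m}$, a capset of non-squares that is complete relative to the non-squares. The proposed set is $\lambda(\pm1+x\sqrt d)^2$, $x\in\F_{3^{m/2}}^*$, applied iteratively.

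\textbf{What you do instead.} You use only the case $m$ odd, which Theorem~\ref{thm:two-conics} proves completely, and handle the other residues of $n$ with at most three doublings.

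\textbf{Cost and gain.} The cost is a constant factor. In the classes $n\equiv 0,1\pmod 4$ your sets are about $4/3$ times larger than what the paper's route aims for (roughly $2\cdot 3^{n/2}$ for even $n$). This matters for the paper's ``smallest known'' claim but not for the $O(\sqrt{3^n})$ statement. The gain is that you avoid the paper's delicate step entirely, and that step has a real problem as literally written: the added set does not block all non-squares. Since $(-1+x\sqrt d)^2=(1-x\sqrt d)^2$, the set is just the single punctured parabola $\{\lambda(1+dx^2+2x\sqrt d)\}$. If you try to write $\lambda=-(s_1+s_2)$ with $s_1,s_2$ in this set, the $\sqrt d$-components force $x_2=-x_1$, and then $x_1=0$. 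So $(0,\lambda)$ could still be added.

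\textbf{A small correction to your last paragraph.} When $a\notin C$ and $t=2$, the three lifted points are automatically distinct, because their last coordinates are $0,1,2$. All you use there is the existence of $c_1,c_2\in C$ with $a+c_1+c_2=0$, which is just the completeness of $C$.
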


\begin{proof}
Assume first that $n=2m$ even and start with the construction given by Theorem \ref{thm:two-conics} of a capset of size $2(3^m -1) = O(\sqrt{3^n})$ in $\F_{3^m}^2$. If $m$ is odd, this is already complete.
When $m$ is even, we can only add points of the form $(0,b), b$ non-square. So we need a complete capset in $\F_{3^m}$ comprised of non-squares. Viewing $\F_{3^m}$ as $\F_{3^{m/2}}(\sqrt{d})$ and selecting $\lambda$ a non-square in $\F_{3^m}$, we can take elements of the form $\lambda(\pm 1+x\sqrt{d})^2, x \in \F_{3^{m/2}}^*$ which will be a capset as in the theorem, up to an affine transformation, since $(\pm 1+x\sqrt{d})^2 = (1+dx^2) \pm 2x\sqrt{d}$. Iterating this construction gives the result for $n$ even.

To construct small complete capsets in $\F_{3^n}$ when $n$ is odd, write $n=2m+1$, take the complete capset $C \subset \F_{3^{2m}}$ constructed above and consider $C \times \{0,1\} \subset \F_{3^n}$. It is straightfoward that this is a capset. To see it is complete, consider a pointi $P$ in its complement with last coordinate $c \in \F_3$ and let $P_0 \in \F_{3^{2m}}$ be the point corresponding to the other coordinates. If $c \in \{0,1\}$, then $P_0 \not\in C$ and there is a line in the subspace of points whose last coordinate is $c$ through $P$ meeting $C \times \{c\}$ in two points. If $c=2$ and $P_0 \in C$, the vertical line through $P$ meets the capset in two points. If $c=2$ and $P_0 \not\in C$, there is a line through $P_0$ meeting $C$ in $Q_0,Q_1$, say. Then $(Q_0,0), (Q_1,1)$ are in $C \times \{0,1\}$ and are collinear with $P$.
\end{proof}

We will now consider the question of whether we can construct capsets with more than two parabolas, in the spirit of Theorem \ref{thm:two-conics}. We begin with the following lemma,

\begin{Lemma}\label{lemma-conditions}
If $P_i = (x_i,c_ix_i^2), i=1,2,3$ with $x_i,c_i \in \F_{3^m}$ nonzero and $P_i$ distinct
are such that $P_1+P_2+P_3=0$, then
$-(c_1c_2+c_1c_3+c_2c_3)$ is a square in $\F_{3^m}$.
\end{Lemma}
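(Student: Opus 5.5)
We have $x_1+x_2+x_3=0$ and $c_1x_1^2+c_2x_2^2+c_3x_3^2=0$. The plan is to eliminate $x_3$ and then read off a discriminant.

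First, substitute $x_3=-(x_1+x_2)$ into the second equation to get the binary quadratic form
$$(c_1+c_3)x_1^2+2c_3x_1x_2+(c_2+c_3)x_2^2=0.$$
This form has the nontrivial zero $(x_1,x_2)$ with $x_1,x_2\neq 0$. Its discriminant (in the sense $B^2-4AC$) is
$$4c_3^2-4(c_1+c_3)(c_2+c_3)=-4(c_1c_2+c_1c_3+c_2c_3).$$
Since $4=1$ in characteristic $3$, this equals $-(c_1c_2+c_1c_3+c_2c_3)$. The characteristic is odd, so a binary quadratic form with a nontrivial zero has square discriminant. Concretely:

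If $c_1+c_3\neq0$, set $t=x_1/x_2$. Then $t$ is a root of $(c_1+c_3)t^2+2c_3t+(c_2+c_3)=0$. Completing the square, $\bigl(2(c_1+c_3)t+2c_3\bigr)^2$ equals the discriminant, which is therefore a square.

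If $c_1+c_3=0$, the discriminant is $4c_3^2$, which is automatically a square.

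It remains to check that the hypotheses of distinctness and nonvanishing are not actually needed, beyond $x_2\neq0$ so that dividing is allowed. That holds since $x_2\neq0$ is given. So the proof is short, and the only subtle point is the degenerate leading coefficient, which is handled above. The value $0$ counts as a square, so no further case is needed.
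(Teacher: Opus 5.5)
Your proof is correct and follows the paper's argument: substitute $x_3=-(x_1+x_2)$, regard $x_1/x_2$ as a root of the resulting quadratic, and conclude that its discriminant $-(c_1c_2+c_1c_3+c_2c_3)$ is a square. The explicit completing-the-square identity and your separate handling of the degenerate case $c_1+c_3=0$, which the paper passes over, make the argument more careful but do not change the route.
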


\begin{proof}
We have $x_1+x_2+x_3 = c_1x_1^2+c_2x_2^2 +c_3x_3^2=0$. So
$c_1x_1^2+c_2x_2^2 +c_3(-x_1-x_2)^2=0$ which yields
$(c_1+c_3)x_1^2+ (c_2+c_3)x_2^2 +2c_3x_1x_2=0$. So we get a quadratic equation with root $x_1/x_2$ and so its discriminant $c_3^2-(c_1+c_3)(c_2+c_3) = -(c_1c_2+c_1c_3+c_2c_3)$ is a square.

\end{proof}

If $m$ is odd, then no choice of $a,b,c$ distinct will result in $\{(x,ax^2) \mid x \in \F_{3^m}^*\}\cup \{(x,bx^2) \mid x \in \F_{3^m}^*\}\cup \{(x,cx^2) \mid x \in \F_{3^m}^*\}$ being a capset. Indeed, applying the lemma to all possible $6$ choices of $c_i$ equal two of $a,b,c$ result in 
$$ab-a^2,ab-b^2,ac-a^2,ac-c^2,bc-b^2,bc-c^2$$
all being non-squares (since $-1$ is a non-square). If we denote by $\chi(.)$ the quadratic character of $\F_{3^m}^*$, we get using that $\chi(-1)=-1$,
$\chi(a)=-\chi(b-a)=-\chi(b)$ from the first two conditions, $\chi(a)=-\chi(c-a)=-\chi(c)$ from the next two (so $\chi(b)=\chi(c)$), but we get $\chi(b)=-\chi(c-b)=-\chi(c)$ from the last two, which is a contradiction.


For even $m$, we ask for what integers $K$ and choices of $c_i$ will $\{(x,c_i x^2) \mid c_i,x\in \F_{3^m}, \;1\leq i\leq K\}$ be a capset in $\F_3^{2m}$.

\begin{Lemma}\label{lemma-bound on conditions}
	Let $m$ be even and let $a\in \F_{3^m}$ be an element such that $a, a^3,\dots,a^{3^{m-1}}$ are distinct. Consider the set $\{(x,a^{3^i} x^2) \mid x\in \F_{3^m}, \;0\leq i\leq m-1\}$. The number of multiplicatively independent conditions of the form $-(c_1c_2+c_1c_3+c_2c_3)$ is a non-square (as follows from Lemma \ref{lemma-conditions}) is at most $\lceil (m^2+2)/6\rceil$.
\end{Lemma}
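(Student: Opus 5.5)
The plan is to count the conditions modulo the symmetries that preserve quadratic character. Write $c_i = a^{3^{i}}$ and let $\sigma$ be the Frobenius $x\mapsto x^3$.

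\emph{Which conditions arise.} Lemma \ref{lemma-conditions} applies to any triple $(c_1,c_2,c_3)$ chosen from $\{a^{3^i}\}$. Repetitions are allowed, since the points $P_i$ only need to be distinct, not the coefficients. Each triple gives the condition that $f(c_1,c_2,c_3) = -(c_1c_2+c_1c_3+c_2c_3)$ is a non-square. So the conditions are indexed by multisets $\{i,j,k\}$ of exponents in $\mathbb{Z}/m$, with two cases:
\begin{itemize}
\item all three indices equal, which gives $-3c^2=0$ in characteristic $3$, so it is vacuous or excluded;
\item exactly two equal, $\{i,i,j\}$, which gives $-(c_i^2+2c_ic_j) = -c_i^2+c_ic_j = c_i(c_j-c_i)$.
\end{itemize}
Multisets with $i,j,k$ all distinct remain.

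\emph{Reducing by Frobenius.} Since $\sigma$ preserves squares, $\chi(f(c_{i+1},c_{j+1},c_{k+1})) = \chi(f(c_i,c_j,c_k)^3) = \chi(f(c_i,c_j,c_k))$. Hence conditions in the same orbit under the translation action $i\mapsto i+1$ of $\mathbb{Z}/m$ on index multisets are multiplicatively dependent: they differ by a cube, and in fact by a $3^t$-th power, so they have the same character. The number of independent conditions is therefore at most the number of orbits of $\mathbb{Z}/m$ on $3$-element multisets of $\mathbb{Z}/m$, excluding the all-equal ones (or on the relevant subset).

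\emph{Counting orbits with Burnside.} The number of size-$3$ multisets is $\binom{m+2}{3}$. Translation by $t$ fixes a multiset only when the multiset is a union of orbits of $\langle t\rangle$ with total size $3$. That forces the orbit length to be $1$ or $3$:
\begin{itemize}
\item $t=0$ fixes all $\binom{m+2}{3}$ multisets;
\item $t$ of order $3$ (these exist when $3\mid m$, and there are two such $t$) each fix the $m/3$ cosets of the subgroup of order $3$.
\end{itemize}
The orbit count is therefore
\[
\frac{1}{m}\Bigl(\binom{m+2}{3} + 2\cdot\tfrac{m}{3}\,[3\mid m]\Bigr) = \frac{(m+1)(m+2)}{6} + \tfrac{2}{3}[3\mid m].
\]
The stated bound $\lceil (m^2+2)/6\rceil$ is smaller by about $m/2$. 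So I would also subtract the $m$ all-equal multisets, which form one orbit contributing $1$, and exploit a further relation.

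\emph{The extra relation, and the main obstacle.} The candidate is that $c_i(c_j-c_i)$ and $c_j(c_i-c_j)$ have product $-c_ic_j(c_i-c_j)^2$, whose character is $-\chi(c_i c_j)$. This is essentially $\chi(a)^2$ up to the sign $\chi(-1)=1$ for $m$ even, so the product is a square. Thus the orbit of $\{i,i,j\}$ is dependent on the orbit of $\{j,j,i\}$. This pairs up the roughly $m-1$ orbits of two-equal multisets, saving about $(m-1)/2$. One should also check whether shifting by $m/2$ and reflection-type symmetries identify further orbits.

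Reconciling the resulting count with $\lceil (m^2+2)/6\rceil$ exactly, through the floors and ceilings depending on $m \bmod 6$, is the step I expect to be fiddly. I would verify it case by case for $m\equiv 0,2,4 \pmod 6$, checking small cases ($m=2,4$) by hand to fix which orbits are genuinely distinct.
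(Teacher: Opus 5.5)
Your ingredients are the paper's. You use invariance of $\chi$ under the Frobenius shift, discard the all-equal triples, and identify $\{i,i,j\}$ with $\{j,j,i\}$. That last step works because $c_i(c_j-c_i)\cdot c_j(c_i-c_j)=-c_ic_j(c_i-c_j)^2$ is a nonzero square: its character is $\chi(-1)\chi(a^{3^i+3^j})=1$. The ``$-\chi(c_ic_j)$'' in your text should read $\chi(-1)\chi(c_ic_j)$. The paper states the same step as $(0,0,m-i)\sim(i,i,0)\sim(0,0,i)$, using that $a^{3^i-1}$ is a square; it is the same computation.

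Your Burnside count over all $3$-multisets replaces the paper's two separate counts. The paper counts the $\binom{m}{3}$ distinct-index triples (free action except for the orbit of $\{0,m/3,2m/3\}$) and the two-equal triples separately. Both routes work. What is missing is only the final bookkeeping, which you left as ``about $(m-1)/2$'' and ``fiddly''. It closes exactly, with no further symmetries needed (shift by $m/2$, reflections). For an upper bound, extra identifications could only lower the count anyway.

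To finish, note that the two-equal multisets $\{i,i,j\}$, $i\ne j$, form exactly $m-1$ orbits, since no nontrivial shift fixes one. These orbits are indexed by $t=j-i\in\{1,\dots,m-1\}$. Shifting $\{j,j,i\}$ by $-j$ gives $\{0,0,-t\}$, so your relation pairs orbit $t$ with orbit $-t$. The only self-paired orbit is $t=m/2$. These $m-1$ orbits therefore collapse to $m/2$ classes, a saving of exactly $m/2-1$. The number of classes is then
\[
\frac{(m+1)(m+2)}{6}+\frac{2}{3}[3\mid m]-1-\Bigl(\frac{m}{2}-1\Bigr)=\frac{m^2+2}{6}+\frac{2}{3}[3\mid m].
\]
Since $m$ is even, $m^2+2\equiv 0\pmod 6$ when $3\nmid m$, and $m^2+2\equiv 2\pmod 6$ when $6\mid m$. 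In both cases the count is precisely $\lceil (m^2+2)/6\rceil$, so no case analysis modulo $6$ beyond this is needed.

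Equivalently, your distinct-index orbits number $(m-1)(m-2)/6+\tfrac{2}{3}[3\mid m]=\lceil (m-1)(m-2)/6\rceil$. This matches the paper's count, and adding the $m/2$ two-equal classes gives the stated bound.
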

\begin{proof}
	First note that $x^{3^i} = x^{3^{i \pmod m}}$. We will represent the condition $\chi(a^{3^i}a^{3^j} + a^{3^i}a^{3^k} + a^{3^j}a^{3^k}) = -1$ as the triple $(i,j,k)$. We have that $x^3$ is a non-square if and only if $x$ is a non-square. Hence $(a^{3^i}a^{3^j} + a^{3^i}a^{3^k} + a^{3^j}a^{3^k})^3 = a^{3^{i+1}}a^{3^{j+1}} + a^{3^{i+1}}a^{3^{k+1}} + a^{3^{j+1}}a^{3^{k+1}}$ is a non-square if and only if $a^{3^i}a^{3^j} + a^{3^i}a^{3^k} + a^{3^j}a^{3^k}$ is a non-square. Therefore the condition given by $(i,j,k)$ is equivalent to the condition given by $(i+1, j+1, k+1)$ (modulo $m$).
	
	We now consider the conditions given by $(i,j,k)$, where $i< j< k$. There are $\binom{m}{3}$ such conditions, and the relation found above gives classes of $m$ conditions which are equivalent, except for when $m$ is a multiple of $3$; If $m$ is a multiple of $3$, then the condition given by $(0, m/3, 2m/3)$ is equivalent to only $m/3$ other conditions by the above relationship. Therefore the $\binom{m}{3}$ conditions can be reduced to $\binom{m}{3}/m = (m-1)(m-2)/6$ conditions if $m$ is not a multiple of three, and $(\binom{m}{3} - \frac{m}{3})/m + 1 = \left\lceil (m-1)(m-2)/6 \right\rceil$ conditions if $m$ is a multiple of three.
	
	Now consider the condition given by $(i, i, j)$ with $i\neq j$. We have seen before that this condition is equivalent to the condition given by $(0,0,j-i\pmod m)$. We now show that for all $1\leq i\leq m-1$, the conditions given by $(0,0,i)$ and $(0,0,m-i)$ are equivalent: The condition given by $(0,0,m-i)$ is equivalent to the condition given by $(i,i,0)$. This condition is $\chi(a^{2\cdot3^i} - a^{3^i+1}) = \chi(a^{3^i-1})\chi(a^{3^i+1} - a^2) = -1$. But $a^{3^i-1}$ is a square, so this is equivalent to $\chi(a^{3^i+1} - a^2) = -1$. But the condition given by $(0,0,i)$ is $\chi(a^2 - a^{3^i+1})=-1$. Hence the conditions given by $(0,0,i)$ and $(0,0,m-i)$ are equivalent. Therefore the conditions given by $(i, i, j)$, for all $i\neq j$, reduce to $m/2$ conditions.
	
	Therefore the conditions given by Lemma \ref{lemma-conditions} can be reduced to $\left\lceil (m-1)(m-2)/6\right\rceil + m/2 = \left\lceil (m^2+2)/6\right\rceil$ conditions. Hence the number of independent conditions is at most $\left\lceil (m^2+2)/6 \right\rceil$.
\end{proof}

Computationally, we have found that the bound in Lemma \ref{lemma-bound on conditions} is tight for even $m\leq 16$, and that for even $m\leq 20$ there is some choice of $a$ in $\F_{3^m}$ such that the construction in Lemma \ref{lemma-bound on conditions} is a capset. Code available at \cite{git}.

\begin{Remark}\label{remark-frobenius-construction}
    When $m$ is even, there is a capset in $\F_3^{2m}$ of the form $\{(x,a_i^{3^j} x^2) \mid a_i,x\in \F_{3^m}, \;1\leq i\leq k, \; 0\leq j\leq m-1\}$, found by computation, for the following values of $m$ and $k$ displayed in Table \ref{table-capsets1}.
\end{Remark}

\begin{table}[h!]
	\centering
	\begin{tabular}{||c c c c||} 
		\hline
		$m$ &  $k$ & number of coefficients & size of capset\\ [0.5ex] 
		\hline\hline
		2 &  1 & 2 & 16 \\ 
		\hline
		4 & 1 & 4 & 320 \\
		\hline
		6 & 1 & 6 &  4,368 \\
		\hline
		8 & 2 & 16 & 104,960 \\
		\hline
		10 & 4 & 40 & 2,361,920 \\
		\hline
		12 & 7 & 84 & 44,640,960 \\
		\hline
		14 & 11 & 154 & 736,577,072 \\ [0.5ex]
		\hline
	\end{tabular}
    \caption{} \label{table-capsets1}
\end{table}

 Table \ref{table-capsets2} shows the largest capsets in $\F_3^{2m}$ of the form $\{(x,c_i x^2) \mid c_i,x\in \F_{3^m}, \;1\leq i\leq K\}$ that we have found for small $m$.

\begin{table}[h!]
	\centering
	\begin{tabular}{||c c c c c||} 
		\hline
		$m$ & number of coefficients ($K$) & size of capset & arising from & exhaustive \\ [0.5ex] 
		\hline\hline
		2 &  2 & 16 & exhaustive search & yes \\ 
		\hline
		4 & 4 & 320 & exhaustive search & yes \\
		\hline
		6 & 8 & 5,824 & exhaustive search & yes \\
		\hline
		8 & 20 & 131,200 & random search & no \\
		\hline
		10 & 40 & 2,361,920 & Remark \ref{remark-frobenius-construction} & no \\
		\hline
		12 & 84 & 44,640,960 & Remark \ref{remark-frobenius-construction} & no \\
		\hline
		14 & 154 & 736,577,072 & Remark \ref{remark-frobenius-construction} & no \\ [0.5ex]
		\hline
	\end{tabular}
	\caption{Computational Results} \label{table-capsets2}
\end{table}


So far, we have discussed capset constructions via unions of parabolas in the plane. We can also construct capsets from paraboloids in three-space. By contrast from the plane case, these are always complete.

\begin{Theorem}
\label{thm:quadric}
Let $\lambda$ be a non-square in $\F_q, q=3^m$ and $Q \subset \F_q^3$ be given by
$$Q = \{(x,y,x^2-\lambda y^2) \mid x,y \in \F_q\}.$$
Then $Q$ is a complete capset.
\end{Theorem}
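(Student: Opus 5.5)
Write $N(x,y)=x^2-\lambda y^2$. Since $\lambda$ is a non-square, this is the norm form of $\F_{q^2}=\F_q(\sqrt{\lambda})$, so $N$ is anisotropic: $N(u,v)=0$ only when $u=v=0$. The plan is to prove the cap property directly from $\F_3$-collinearity, as in the proof of Theorem \ref{thm:two-conics}, and then to prove completeness by solving for a partner point explicitly.

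\emph{Cap property.} Over $\F_3$, three distinct points $P_1,P_2,P_3$ are collinear exactly when $P_1+P_2+P_3=0$. Write $P_i=(x_i,y_i,N(x_i,y_i))$. Substituting $x_3=-(x_1+x_2)$ and $y_3=-(y_1+y_2)$ into the third coordinate gives
$$N(x_1,y_1)+N(x_2,y_2)+N(x_1+x_2,y_1+y_2)=2N(x_1,y_1)+2N(x_2,y_2)+2B,$$
where $B=x_1x_2-\lambda y_1y_2$. Using $2=-1$ in characteristic $3$, this equals $-N(x_1-x_2,y_1-y_2)$. If it vanishes, anisotropy forces $(x_1,y_1)=(x_2,y_2)$, so $P_1=P_2$, contradicting distinctness. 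Hence $Q$ is a cap, and in particular a capset.

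\emph{Completeness.} Take $(a,b,c)\notin Q$, so $c\ne N(a,b)$. We need distinct $P_1,P_2\in Q$ with $P_1+P_2+(a,b,c)=0$. Set $x_2=-a-x_1$ and $y_2=-b-y_1$. The condition becomes
$$N(x_1,y_1)+N(a+x_1,b+y_1)+c=0,$$
that is,
$$2N(x_1,y_1)+2(ax_1-\lambda by_1)+N(a,b)+c=0.$$
Completing the square, with $-2=1$, turns this into
$$N(x_1-a,\,y_1-b)=N(a,b)+c-N(a,b)\cdot(\text{const})$$
for some constant to be computed. In any case we get $N(x_1+\alpha,y_1+\beta)=\gamma$, where $\gamma$ is an explicit element of $\F_q$ depending on $a,b,c$. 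The norm map $\F_{q^2}^*\to\F_q^*$ is surjective, so every $\gamma\ne0$ is attained, and $\gamma=0$ is attained only at the origin.

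\emph{Main obstacle.} The delicate step is checking two things: that $\gamma\ne 0$ exactly when $c\ne N(a,b)$, and that a solution gives $P_1\ne P_2$. If $P_1=P_2$, then $2P_1=-(a,b,c)$ gives $P_1=(a,b,c)$, which would put $(a,b,c)\in Q$; so distinctness is automatic. A cleaner route is to translate first. The map $(x,y,z)\mapsto(x+a,\,y+b,\,z+2(ax-\lambda by)+N(a,b))$ is $\F_3$-affine and preserves $Q$, so we may assume $(a,b)=(0,0)$ and $c\ne0$. The equation then reduces to $2N(x_1,y_1)+c=0$, i.e. $N(x_1,y_1)=c$. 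This is solvable because $c\ne0$ and the norm is surjective, which finishes completeness.
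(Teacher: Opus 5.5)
Your proof is correct, but it takes a more elementary route than the paper.

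\textbf{The paper's route.} It gets the cap property by citing that $Q$ is an elliptic quadric, hence a cap in the stronger $\F_q$-sense. It proves completeness geometrically. It intersects $Q$ with its point-reflection $Q'=\{P : -(a,b,c)-P\in Q\}$ and notes that the two quadrics share the pair of conjugate lines $x^2-\lambda y^2=0$ at infinity. Bézout then shows the affine part of $Q\cap Q'$ is a conic, which is asserted to have a rational point.

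\textbf{Your route.} You replace both steps with explicit characteristic-$3$ algebra.
\begin{itemize}
\item The identity $z_1+z_2+z_3=-N(x_1-x_2,y_1-y_2)$, together with anisotropy of the norm form, gives the capset property directly.
\item The $Q$-preserving affine map reduces completeness to solving $N(x_1,y_1)=c\neq 0$, which is surjectivity of the norm $\F_{q^2}^*\to\F_q^*$.
\end{itemize}
Your completing-the-square line already finishes the job without the translation. Carrying it out gives $N(x_1-a,y_1-b)=c-N(a,b)$, so $\gamma=c-N(a,b)$, which is nonzero exactly when $(a,b,c)\notin Q$.

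\textbf{What each approach gives.} Your version is self-contained. It also makes explicit two points the paper's sketch leaves implicit:
\begin{itemize}
\item the residual conic is nonsingular (this is your $\gamma\neq 0$);
\item its rational points are all affine, because $N$ is anisotropic, so the conic $N(x-a,y-b)=\gamma$ has no rational points at infinity.
\end{itemize}
It even yields a count. The $q+1$ solutions pair up under $(x_1,y_1)\mapsto(-a-x_1,-b-y_1)$ with no fixed points. So every point off $Q$ lies on exactly $(q+1)/2$ lines meeting $Q$ twice. The paper's version buys the stronger $\F_q$-cap statement, and a geometric template (intersect with the reflected quadric) that should transfer to other quadric constructions.

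\textbf{A wording fix.} Your identity only establishes the $\F_3$-condition $P_1+P_2+P_3\neq 0$. In the paper's terminology, ``Hence $Q$ is a cap'' should therefore read ``capset''. That is all the theorem needs.
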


\begin{proof}
Since $Q$ is an elliptic quadric, it is a cap, so it is a capset.
To show it is complete, let $(a,b,c) \notin Q$ and consider the set
$Q' = \{(x,y,z) \mid -(a,b,c)-(x,y,z) \in Q\}$. This is clearly also
an elliptic quadric. A point $(x,y,z) \in Q \cap Q'$ gives rise to two points $(x,y,z), -(a,b,c)-(x,y,z) \in Q$ collinear with $(a,b,c)$. As algebraic varieties, $Q$ and $Q'$ meet at infinity
on a pair of (conjugate) lines so the affine part of their intersection is a (possibly singular, but it is not) conic (by B\'ezout), hence has a rational point.
\end{proof}


\end{document}